\newtheorem{lemma}{Lemma}       \newtheorem{coro}{Corollary}
\let\paragraph\subsection
\title{Weierstrass elliptic functions for the pendulum}
\author{Oliver Knill}
\date{6/21/2023}
\address{Department of Mathematics \\ Harvard University \\ Cambridge, MA, 02138 }
\begin{document}

\begin{abstract}
The mathematical pendulum is traditionally solved using a Jacobi elliptic functions.
We solve it here using the Weierstrass elliptic function. 
Every initial condition of the pendulum 
produces an elliptic curve and a point which by the dynamics of the pendulum 
is translated linearly on the torus. 
\end{abstract}

\maketitle

\section{A polynomial differential equation}

\paragraph{}
The {\bf mathematical pendulum} is the second order nonlinear differential equation 
$$  \theta''= 4c\sin(\theta) \; . $$
The constant $c$ is a real parameter which in a physical setup would be
$c=-g/(4l)$, where $g$ is the gravitational strength and $l$ is 
the length of the pendulum rod.
The pendulum has the {\bf conserved energy} $E=\theta'^2/2+4c \cos(\theta)$
and is so a {\bf Hamiltonian system} $x'=H_y,y'=-H_x$ for the {\bf Hamiltonian}
$$ H(x,y)=\frac{y^2}{2}+4c\cos(x) $$ 
on the cylinder $\mathbb{T} \times \mathbb{R}$. 
Traditionally, the solution is obtained by solving the energy equation for $\theta'$
then separate the variables $t,\theta$ and inverting a {\bf Jacobi elliptic integral}.
We proceed differently using {\bf Weierstrass elliptic curves}.

\paragraph{}
We start by setting $u=e^{i \theta} \in \mathbb{C}$. This produces a
{\bf polynomial differential equation} of second order, where 
$u''= \frac{d^2}{dt^2} u$ is the second derivative for a real or complex
time $t$. 

\begin{lemma}[Polynomial Pendulum Equation]
$u'' = 6cu^2 - 2Eu +2c$. 
\end{lemma}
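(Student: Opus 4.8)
The plan is to obtain the polynomial equation by differentiating $u=e^{i\theta}$ twice in $t$, then substituting both the equation of motion $\theta''=4c\sin(\theta)$ and the energy conservation law $E=\theta'^2/2+4c\cos(\theta)$, and finally checking that the non-polynomial pieces reorganize themselves into a genuine polynomial in $u$.

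First I would apply the chain rule. Since $u=e^{i\theta}$, differentiating once gives $u'=i\theta'\,u$, and differentiating again gives $u''=i\theta''\,u-(\theta')^2\,u$. This already isolates exactly the two dynamical quantities I must control, namely $\theta''$ and $(\theta')^2$, and reduces the problem to expressing each of these in terms of $u$.

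Next I would pass everything through the identities $\sin(\theta)=(u-u^{-1})/(2i)$ and $\cos(\theta)=(u+u^{-1})/2$. Substituting the pendulum equation yields $i\theta''\,u = i\cdot 4c\cdot\frac{u-u^{-1}}{2i}\cdot u = 2c(u^2-1)$, where the multiplicative factor $u$ is precisely what converts the inverse power $u^{-1}$ into the constant $-1$. For the remaining term I would eliminate $(\theta')^2$ using the conserved energy: from $E=(\theta')^2/2+4c\cos(\theta)$ I solve $(\theta')^2 = 2E-4c(u+u^{-1})$, so that $(\theta')^2\,u = 2Eu-4c(u^2+1)$. This is the step where energy conservation does the real work, trading the transcendental quantity $(\theta')^2$ for something algebraic in $u$.

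Finally I would collect the two contributions, $u'' = 2c(u^2-1)-\bigl[2Eu-4c(u^2+1)\bigr] = 6cu^2-2Eu+2c$, which is the claimed identity. I do not expect a genuine obstacle here beyond bookkeeping; the one point worth flagging is the mechanism that makes the result a polynomial rather than a Laurent polynomial. Both occurrences of $u^{-1}$, one coming from $\sin(\theta)$ in $\theta''$ and one from $\cos(\theta)$ in the energy, enter multiplied by the extra factor $u$ from $u''=i\theta''u-(\theta')^2u$, so they collapse into constants and disappear from the negative-degree part. It is this cancellation, together with the energy substitution, that is the essential content of the lemma.
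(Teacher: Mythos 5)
Your proposal is correct and follows essentially the same route as the paper's proof: differentiate $u=e^{i\theta}$ twice to get $u''=i\theta''u-(\theta')^2u$, then substitute $i\theta''u=2c(u^2-1)$ from the equation of motion and $(\theta')^2u=2Eu-4c(u^2+1)$ from energy conservation. Your explicit remark on why the $u^{-1}$ terms cancel (the extra factor of $u$ turning Laurent terms into constants) is a nice clarification of what the paper leaves implicit, but the argument is the same.
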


\begin{proof}
From $u' = i u \theta'$ and using $\theta'^2 u=2Eu-4c(u^2+1)$
and $i \theta'' u = i 4 c u \sin(\theta) = 2c (u^2-1)$, we have
\begin{eqnarray*}
u'' &=& -u \theta'^2 +i u \theta'' \\
    &=& -2Eu+4c(u^2+1) + 2c (u^2-1) \\
    &=& 6cu^2-2Eu+2c  \; . 
\end{eqnarray*}
\end{proof}

\paragraph{}
For $u=w+E/(6c)$ we an depress the quadratic polynomial to the right and get
$w'' = 6cw^2+a$, where $a=2c-E^2/(6c)$. 
With the new variable $p=w/c^{1/3}$ and the new time $z=c^{1/3} t$
and $g_2=-2a/c^{2/3}$, we arrive at the differential equation
$$ p'' = 6 p^2 - g_2/2 \; . $$
This differential equation is solved by
$p(z)=\wp(z+g_1,g_2,g_3)$, where the initial conditions
$(\theta(0),\theta'(0))$ defines the {\bf elliptic curve} 
$$\wp'^2=4\wp^3-g_2 \wp-g_3 $$ 
and a point $g_1$ on it. As we have $g_2$ given from the energy, 
the constant $g_3$ is obtained from the initial conditions 
$w_3=-\wp'^2(0)+4\wp^3(0)-g_2 \wp(0)$
and $g_1$ is fixed from the initial conditions.

\paragraph{The Weierstrass elliptic function}

\paragraph{}
Any two complex numbers $\omega_1,\omega_2$ which define linearly 
independent real vectors in $\mathbb{R}^2$ define a {\bf lattice}
$\Lambda = \{ n \omega_1  + m \omega_2 \; | \;  n,m \in \mathbb{Z} \}$,
the {\bf 2-torus} $\mathbb{T}^2=\mathbb{R}^2/\Lambda$ 
and the {\bf Weierstrass elliptic function} summing over all non-zero
lattice points $\dot{\Lambda}$:
$$ \wp(z) = \frac{1}{z^2} + \sum_{\lambda \in \dot{\Lambda}} 
   \frac{1}{(z-\lambda)^2}-\frac{1}{\lambda^2} \; . $$

\paragraph{}
The $\wp$ satisfies the differential equation
$$ \wp'(z)^2 = 4\wp(z)^3 -g_2 \wp(z)-g_3 \; , $$
where $g_2,g_3$ are determined by the lattice $\Lambda$ so that
$(\wp(z),\wp'(z))$ are on the {\bf elliptic curve}
$$  y^2=4x^3-g_2 x - g_3 \; . $$
One just needs to realize that 
$\wp'(z)^2 - 4 \wp(z)^3 +g_2 \wp(z)$ is analytic and so constant 
\cite{AhlforsComplexAnalysis}.
The elliptic curve over $\mathbb{C}$ is a 2-torus if the {\bf discriminant}
$\Delta= g_2^3-27 g_3^2$ is non-zero. 
Slightly less well known is that $\wp$ satisfies a second order 
quadratic differential equation: 

\begin{lemma}
For arbitrary complex $g_1,g_2,g_3$, the function
$p(z) = \wp(z+g_1,g_2,g_3)$ is the general solution of the
differential equation $p'' = 6 p^2 - g_2/2$.
\end{lemma}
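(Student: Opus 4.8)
The plan is to derive the second-order equation directly from the first-order Weierstrass relation and then account for the two constants of integration. First I would differentiate the identity $\wp'(z)^2 = 4\wp(z)^3 - g_2\wp(z) - g_3$ with respect to $z$, obtaining $2\wp'\wp'' = 12\wp^2\wp' - g_2\wp'$. Dividing by $2\wp'$ yields $\wp'' = 6\wp^2 - g_2/2$ at every point where $\wp' \neq 0$; since $\wp'$ is a nonconstant meromorphic function, its zeros are isolated, so this equality of meromorphic functions extends to all of $\mathbb{C}$ by continuity (equivalently, analytic continuation). Because the target equation $p'' = 6p^2 - g_2/2$ is autonomous, hence invariant under $z \mapsto z + g_1$, the translate $p(z) = \wp(z+g_1, g_2, g_3)$ is again a solution. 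This settles the verification that every such $p$ solves the ODE.

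It remains to show this two-parameter family is the \emph{general} solution, and here the point is a dimension count: a second-order ODE has a two-dimensional solution space, and I would exhibit $(g_1, g_3)$ as exactly these two degrees of freedom. Multiplying $p'' = 6p^2 - g_2/2$ by $p'$ and integrating once produces the first integral $(p')^2 = 4p^3 - g_2 p - g_3$, where the integration constant is named $-g_3$; thus every solution lies on a Weierstrass elliptic curve whose coefficient $g_3$ is fixed by the energy $g_3 = 4p(0)^3 - g_2 p(0) - p'(0)^2$. For each admissible $g_3$ this first-order equation is solved by $\wp(\cdot\,,g_2,g_3)$, and the remaining freedom — where along the curve the solution begins — is precisely the translation $g_1$.

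To make the counting rigorous I would match arbitrary initial data $(p(0),p'(0)) = (p_0,q_0) \in \mathbb{C}^2$: set $g_3 := 4p_0^3 - g_2 p_0 - q_0^2$ so that $(p_0,q_0)$ lies on $y^2 = 4x^3 - g_2 x - g_3$, then use the surjectivity of the parametrization $z \mapsto (\wp(z),\wp'(z))$ of this curve to select $g_1$ with $\wp(g_1) = p_0$ and $\wp'(g_1) = q_0$. Uniqueness for the second-order ODE then forces the solution with these initial conditions to agree with $\wp(z+g_1,g_2,g_3)$, so no solutions are missed.

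The step I expect to be the main obstacle is this general-solution claim rather than the verification, which is a one-line differentiation. The two delicate points are the passage through the zeros of $\wp'$ (the branch points of the curve, where one must argue from the first integral rather than the reduced equation, and where the hypothesis $\Delta \neq 0$ quietly enters), and the surjectivity of $z \mapsto (\wp(z),\wp'(z))$ onto the curve, which encodes the uniformization of the torus and is what guarantees that the two parameters $(g_1,g_3)$ really do reach every pair of initial conditions.
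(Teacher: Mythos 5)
Your proof is correct, but it follows a genuinely different route from the paper's. For the verification step, the paper never touches the first-order relation $\wp'^2 = 4\wp^3 - g_2\wp - g_3$: it differentiates the lattice series for $\wp$ term by term, observes that $6\wp^2 - \wp''$ has no pole at $0$ (the $z^{-4}$ singularities cancel) and is elliptic, hence constant by Liouville's theorem, and then reads off the constant $g_2/2$ from the expansion at $z=0$. Your derivation---differentiate the first-order relation and divide by $\wp'$, extending across the zeros of $\wp'$ by the identity theorem---is exactly the alternative the paper attributes to Siegel (page 61) in the paragraph \emph{following} its proof, so you have independently reconstructed the literature's argument rather than the author's. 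The trade-off: the paper's Liouville argument is self-contained given the series definition and does not presuppose the algebraic differential equation, while yours is shorter but leans on that prior identity. More substantively, you prove something the paper does not: its proof only checks that $\wp(z+g_1,g_2,g_3)$ \emph{solves} the equation and is silent on why this family is the \emph{general} solution, whereas you supply the missing half---choosing $g_3$ as the first integral of the motion, invoking surjectivity of $z \mapsto (\wp(z),\wp'(z))$ onto the curve to hit arbitrary initial data $(p_0,q_0)$, and concluding by uniqueness for the holomorphic initial value problem. Your closing caveat is also well placed: for degenerate invariants ($\Delta = 0$, which arise exactly from the equilibrium initial conditions $6p_0^2 = g_2/2$, $q_0=0$) the constant solutions sitting at the singular point of the cubic are not of the form $\wp(z+g_1)$ for finite $g_1$, so the lemma as stated, with ``arbitrary'' $g_1,g_2,g_3$ and no nondegeneracy hypothesis, is slightly too strong---a gap in the statement that neither the paper's proof nor any proof can close without qualification.
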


\begin{proof} 
Differentiate the definition of $\wp$ to get
$\wp'=-2 \sum_{\lambda \in \Lambda} \frac{1}{(z-\lambda)^3}$ and 
$\wp''=6 \sum_{\lambda \in \Lambda} \frac{1}{(z-\lambda)^4}$.
The function $6 \wp^2-\wp''$ has lost its singularity at $0$ so that
$6 (\sum_{\lambda \neq 0 \in \Lambda} (\frac{1}{(z-\lambda)^2})^2-\frac{1}{\lambda^2})
-6 \sum_{\lambda \neq 0 \in \Lambda} \frac{1}{(z-\lambda)^4}$ is 
analytic on the torus and therefore must be constant by Liouville's theorem.
Evaluated at $z=0$, to see the constant is $g_2/2$.
\end{proof} 

\paragraph{}
As for the literature, \cite{SiegelComplexFunctionTheoryI} page 61 derives it by differentiating
$\wp'(z)^2 = 4 \wp(z) -g_2 \wp(z)-g_3$ with respect to $z$ and
dividing by $\wp'(z)$. It also appears as Problem 2.4.1 in \cite{PrasolovSolovyev} and
is subject of Corollary 7.1 of \cite{ArmitageEberlein}.

\paragraph{}
The upshot is that the solution of the mathematical pendulum equation 
can be explicitly written down as $\theta(t) = \arg(a+b \wp(q t+g_1,g_2,g_3))$,
where $a,b,q$ are constants depending on $c$,
and $g_1,g_2,g_3$ depend on $\theta_0,\theta'(0)$ and $c$. 
The pendulum trajectory is a closed path on the 
elliptic curve provided $\Delta \neq 0$. This is an elegant alternative
to the solution given as amplitudes of Jacobi elliptic functions
\cite{ArmitageEberlein}, which computer algebra system get to when 
asking for a solution (like with  {\it $DSolve[x''[t]==4 c Sin[x[t]],x[t],t]$} in
Mathematica). 

\section{Pendulum dynamics in the unitary group}

\paragraph{}
The pendulum equation is related to the Lie group $U(1)$.
This can be generalized 
to the unitary $U(n) = \{ A \in M(n,\mathbb{C}), A^* A = 1_n \}$ if 
the  initial conditions commute
\footnote{Unlike in an earlier version, we need commutativity of the initial conditions.}
Every such group element $A$ can be written as $A=e^{i \theta}$ where $a=i \theta$ are
skew Hermitian matrices $a^*=-a$. 
The same integrates also matrix differential equations $\theta'' = 4c \sin(\theta)$, 
where $\theta,\theta'$ are commuting Hermitian matrices.

\paragraph{}
The computation we just did in the Lie algebra 
$g = \mathbb{R}$ of the compact Lie group $G=U(1)$ 
can be done on any Lie algebra $g$ of the unitary group $G=U(n)$
as the elements of the Lie algebra are given by 
Hermitian matrices $x$ so that $\sin$ preserves the Lie algebra.
We get so integrable systems on a subset of $G=U(n)$.
Write an element in the Lie algebra as $a=i \theta$ and an 
element in the Lie group as $u = e^{i \theta}$, then separate 
$e^{i \theta} = \cos(\theta) + i \sin(\theta)$, each $\theta$ is linear 
space of matrices, the exponential can be understood in a linear 
algebra sense so that $e^{i \theta}$ is a matrix, 
representing the group element in $G$. 
Rules like $\sin'(\theta) = \cos(\theta)$ are satisfied by functional 
calculus because Hermitian matrices are normal. 

\paragraph{}
The differential equation $\theta'' =4 c \sin(\theta)$ in the Lie algebra
produces curves $\theta(t)$ in the vector space of Hermitian matrices
and the energy $E = \theta'^2/2 + 4 c \cos(\theta)$ is
conserved if the initial $\theta,\theta'$ commute. 
The differential equation $\theta''=4c \sin(\theta)$ 
has invariant measure of the Hamiltonian flow is located either on 
an equilibrium point or some $k$-dimensional real torus with $k \leq n$.

\begin{proof}
One can diagonalize the $\theta$ reducing the system to $n$ independent penduli 
or proceed as before: define $u=e^{i \theta}$ $w$ with $u=w+E/(6c)$ 
and $p=w/c^{1/3}$ and the new time $z=t/c^{1/3}$ so that $p''=6 p^2 - g_2/2$, 
where $g_2$ is identified with $g_2 I$ in the matrix algebra $G$. 
This means that $p(z) \in M(n,\mathbb{C})$ satisfies the {\bf Weierstrass differential 
equation}. To every matrix entry $p_{ij}(z) \in \mathbb{C}$ is now associated
an {\bf elliptic curve}. If the initial condition $\theta$ and 
velocity $\theta'$ are such that for all $i,j$, the elliptic curve describing 
$p_{ij}(z)$ is non-degenerate, then the solution curve $p_{ij}(t)$ moves on a straight line in 
a $d$-dimensional torus. Once we have determined $p(t)$, we can get $u(t)=c^{1/3} p(t) + E/(6c)$
and get $\theta(t) = {\rm arg}(u(t))$, where the arg function takes an element in the Lie group
$U(n)$ and associates to it $\theta(t)$ such that $e^{i \theta(t)} = u(t)$. 
\end{proof}

\paragraph{}
For almost all initial condition, the solution is 
dense and Diophantine and survives by {\bf KAM theory}
small perturbations of the Hamiltonian. 
Also, by general principles, arbitrary small perturbations of the Hamiltonian 
can now achieve that there are weakly mixing invariant tori \cite{Kni99}.

\begin{coro}
There are arbitrary small perturbations of the pendulum Hamiltonian in the 
unitary group $U(n)$ for $n \geq 2$ such that the system has 
weakly mixing invariant tori.
\end{coro}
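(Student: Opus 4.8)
The plan is to reduce the statement to the general perturbation principle of \cite{Kni99} after checking that the integrable pendulum on $U(n)$ supplies a Diophantine invariant torus of dimension at least two. First I would use the reduction already established: diagonalizing the commuting Hermitian initial data $\theta,\theta'$ decouples the system into $n$ scalar pendula with energies $E_1,\dots,E_n$. Away from the separatrix each scalar pendulum moves periodically, i.e. on a one-dimensional invariant circle carrying a rotation of frequency $\omega_j=\omega_j(E_j)$, so the product is an invariant $n$-torus $\mathbb{T}^n$ in phase space on which the flow is the linear (Kronecker) flow with frequency vector $\omega=(\omega_1,\dots,\omega_n)$. Because the pendulum period depends nonconstantly on its energy, the frequency map $E\mapsto\omega(E)$ is nondegenerate, so for almost every choice of energies $\omega$ is rationally independent and Diophantine; for $n\ge 2$ I fix one such torus of dimension $\ge 2$.

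The second step records why a perturbation is unavoidable and how it should act. A linear flow on $\mathbb{T}^n$ has pure point spectrum and is therefore never weakly mixing, so weak mixing must be manufactured by changing the flow on the torus. The natural mechanism is a time reparametrization: following Kolmogorov's classical example, a smooth time change $1+\varepsilon f$ of a Diophantine linear flow on $\mathbb{T}^2$ can be arranged to have continuous spectrum, hence be weakly mixing, precisely when the dimension is $\ge 2$. This is also the threshold below which the construction fails, since on a circle the flow is periodic, which is the source of the hypothesis $n\ge 2$. The plan is therefore to realize such a reparametrized flow as the restriction of a perturbed Hamiltonian flow to an invariant torus.

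The third step is the perturbation itself, which is the content of \cite{Kni99}. I would add to $H$ a perturbation $\varepsilon K$ supported in a neighbourhood of the chosen torus, engineered so that the symplectic structure is preserved and so that the reduced flow on a persistent torus is exactly the desired time change of the original Kronecker flow. Persistence of the torus is guaranteed by a KAM-type argument using the Diophantine condition on $\omega$, while the size of $\varepsilon K$ is controlled by the size of the time-change function $\varepsilon f$, which can be taken arbitrarily small. Letting $\varepsilon\to 0$ then produces perturbations as small as desired carrying weakly mixing invariant tori.

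The hard part is the simultaneous realization of three constraints in the third step: the perturbation must be genuinely Hamiltonian, it must be arbitrarily small in the relevant ($C^\infty$ or analytic) topology, and it must induce on a persistent torus a time change that is actually weakly mixing rather than merely aperiodic. Weak mixing is a spectral, not merely topological, property, so the time-change function $f$ must be chosen to obstruct the relevant cohomological equation over the Diophantine rotation, following Kolmogorov; coupling this choice to the KAM persistence of the torus, so that both hold for the same small parameter, is the delicate balance on which the argument turns.
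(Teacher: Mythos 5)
Your first step and the overall architecture coincide with the paper's: diagonalize the commuting initial data to split the system into $n$ independent pendula, obtain for generic energies an invariant torus of dimension $\geq 2$ carrying a quasi-periodic flow, note that a linear flow is never weakly mixing, and then invoke \cite{Kni99} to produce the small Hamiltonian perturbation. The paper's proof is exactly this reduction plus that citation. The problem lies in your third step and in the ``hard part'' you isolate: the mechanism you describe cannot work. You propose to keep the \emph{Diophantine} frequency $\omega$ (so that KAM gives persistence) and to choose a smooth time change $1+\varepsilon f$ whose cohomological equation over that rotation is obstructed. For Diophantine $\omega$ no such obstruction exists: the small-divisor estimates show that $\omega\cdot\nabla u = f - \int f\,dx$ has a smooth solution $u$ for \emph{every} smooth $f$, and hence every smooth time change of a Diophantine linear flow is smoothly conjugate to a linear flow, has pure point spectrum, and is never weakly mixing. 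This is Kolmogorov's classical theorem, which you have inverted: the weakly mixing reparametrizations (Shklover, and later Fayad) exist only for Liouvillean, i.e.\ non-Diophantine, frequencies, which is exactly where the cohomological equation can fail to be solvable.

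Consequently the construction of \cite{Kni99} must, and does, perturb the frequency itself: using the nondegeneracy of the frequency map (which you already established in your first step), an arbitrarily small change of the Hamiltonian moves $\omega$ to a nearby Liouville vector, and only then does a time change, realized Hamiltonianly (for instance by multiplying $H$ by a positive function equal to the time-change function near the torus), produce a weakly mixing flow on the torus; the pair consisting of frequency and time change is chosen by a Baire category argument from a dense $G_\delta$. Note also that persistence of the torus is not obtained from KAM --- KAM cannot apply here, since the perturbed flow on the torus is to be weakly mixing and in particular not conjugate to a Diophantine rotation --- but by engineering the perturbation so that the torus remains invariant by construction. With these two corrections (Liouville rather than Diophantine frequency on the perturbed torus, and invariance by construction rather than by KAM), your outline becomes the argument the paper intends.
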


\paragraph{}
If $\theta(0),\theta'(0)$ are simultaneously
diagonalizable, we just have a copy of $1$-dimensional systems.
What is special about the above
differential equation is that $(p(0),p'(0)) \in TU(n)$ determines
for every matrix entry an elliptic curve solving also the matrix equation.
The Weierstrass integration works
if we assume that the initial position and velocity commute and so 
are simultaneously diagonalizable.

\section{Remarks}

\paragraph{}
Any Hamiltonian system in a 2-dimensional phase space $M$ is
{\bf integrable} because the energy function $H$ foliates $M$ into level
curves which are 1-manifolds at non-critical values of $H$. Going beyond
Hamiltonian systems, topology starts to matter for the existence of
non-integrable cases: by Poincar\'e-Bendixon, every differential
equation on the plane (or the compactified sphere) is integrable in
the sense that every flow-invariant measure is supported on the fixed point set
or on a finite collection of periodic orbits. There are vector fields on
$\mathbb{T}^2$ already with weakly mixing invariant measures \cite{CFS,HoKn98}.
The pendulum is special because integrability carries over to the
{\bf Sin-Gordon} partial differential equations
$\theta_{tt}-\theta_{xx} = 4c \sin(\theta)$. One also has integrability for
discrete Sin-Gordon equations $\theta_{tt}-L \theta = 4 c \sin(\theta)$, where
$\theta$ is a function on the vertices of a graph and $L$ the Kirchhoff Laplacian of the
graph. The pendulum can also be seen as a {\bf 2-particle Toda system} and so
a Lax Pair description \cite{Kni93diss}. It seems that the Today system is the right higher
dimensional non-linear generalization of the pendulum. 

\paragraph{}
A long-standing open problem in Hamiltonian dynamics is the
{\bf question of Kolmogorov}, whether there are
examples of Hamiltonian systems with {\bf mixing invariant tori}.
This reportedly had been a motivation for Kolmogorov to develop KAM theory
\cite{Arn97}. We had remarked in 1999 \cite{Kni99}
that it is possible to always get from an invariant KAM torus a
weakly mixing invariant $d$-torus with $d \geq 2$ using arbitrary small perturbations.
This works even in infinite dimensions if an invariant finite dimensional torus 
exists on which one has a quasi-periodic motion. 
The idea was that on two or larger dimensional tori, the linear
ergodic flow can be perturbed so that ergodicity and pure point spectrum 
upgrades to singular continuity and so leads to weak mixing. 
Already the flow $\theta'' = 4 c \sin(\theta)$ on the Lie algebra of $U(2)$ 
produces for almost all initial conditions $(\theta,\theta')$ an orbit
that is dense on a $2$-dimensional real torus in $U(2)$. 
A small perturbation of the Hamiltonian can now produce 
solution curves $z(t)=\exp(i \theta(t)) \in U(2)$ which show a 
weak type of chaos. 

\paragraph{}
Let us end this note with a remark on our motivation. 
A core tool in {\bf smooth ergodic theory} is {\bf Pesin theory} (e.g. \cite{KatokStrelcyn}).
The computation of the metric entropy of a system $x'=F(x)$ requires estimating 
{\bf Lyapunov exponents}, the exponential growth rates of the linearization
$v'(t) = dF(x(t)) v(t)$ along an orbits $x(t)$ of the system. 
Herman's pluri-subharmonicity tools \cite{Her83} 
looks promising for measuring the growth rate. But it requires that the system 
is analytic and that the dynamics can be extended analytically to polydiscs. 
Rewriting the pendulum as an analytic system as in Lemma~1 is an attempt 
to make analytic tools kick in. One of the 
systems which could have mixing invariant absolutely continuous measures
is a time periodic perturbation of the standard pendulum. 
The above reduction of the pendulum to a polynomial differential 
equation emerged from such attempts. One could hope for example that some
analytic perturbation of the pendulum system in $U(2)$ allows 
via subharmonicity to establish positive Lyapunov exponents on a set of positive 
measure for an invariant manifold and so Bernoulli components of positive measure
and positive entropy. One has examples of mixing systems like the geodesic flow
on a compact negatively curved manifold but no system with true 
coexistence \cite{Str89} is known, where part of the phase space has quasi-periodic
motion on submanifolds of total positive volume and part of the phase space 
has mixing components of positive volume. 

\section*{Appendix: some illustrations}

\paragraph{}
It is interesting to look at the explicit shape of the elliptic curves. The lattice
must have the property that the translation in the real direction is periodic. 
When the trajectory passes through the pole, we are in the critical case of the 
pendulum or the geometry in that the discriminant is zero.
We got the lattices numerically. The software of course just picks a representative of the lattice. 

\paragraph{}

\begin{figure}[!htpb]
\scalebox{0.5}{\includegraphics{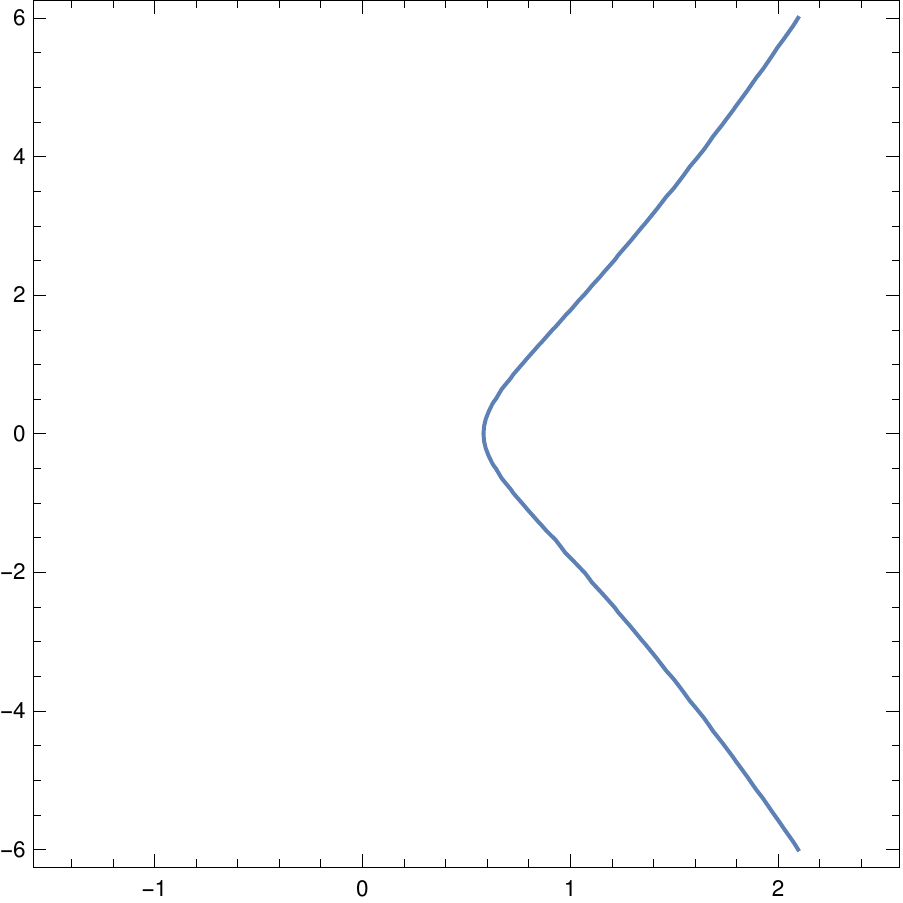}}
\scalebox{0.5}{\includegraphics{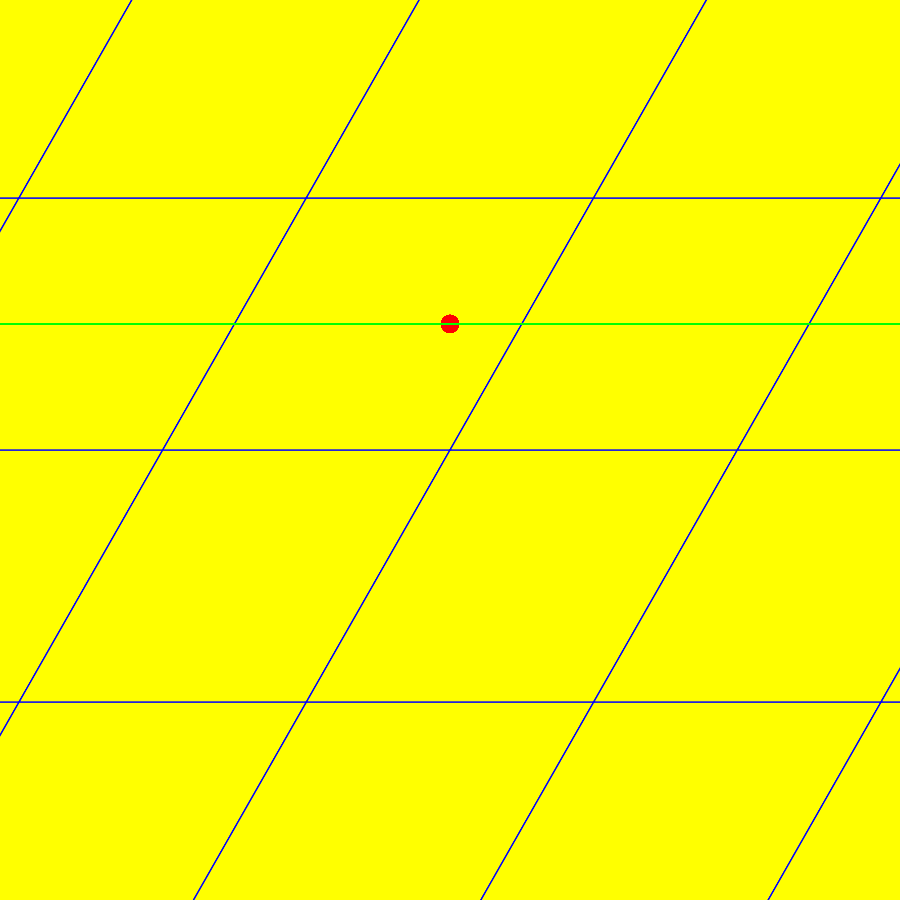}}
\caption{
For $c=-1$ and initial condition $\theta_0=0.$, and $\theta'_0=1.$,
we get the elliptic curve $y^2=4x^3-g_2 x-g_3$ with $g_2= 0.0833333$,
and  $g_3=0.74537$.The lattice is given by $\omega_1= 3.19248\, +0. i$,
and  $\omega_2=1.59624\, +2.80121 i$. The initial point on the curve is
$g_1= 0.\, +1.4006 i$. The discriminant is $\Delta= -15.$.
}
\end{figure}

\begin{figure}[!htpb]
\scalebox{0.5}{\includegraphics{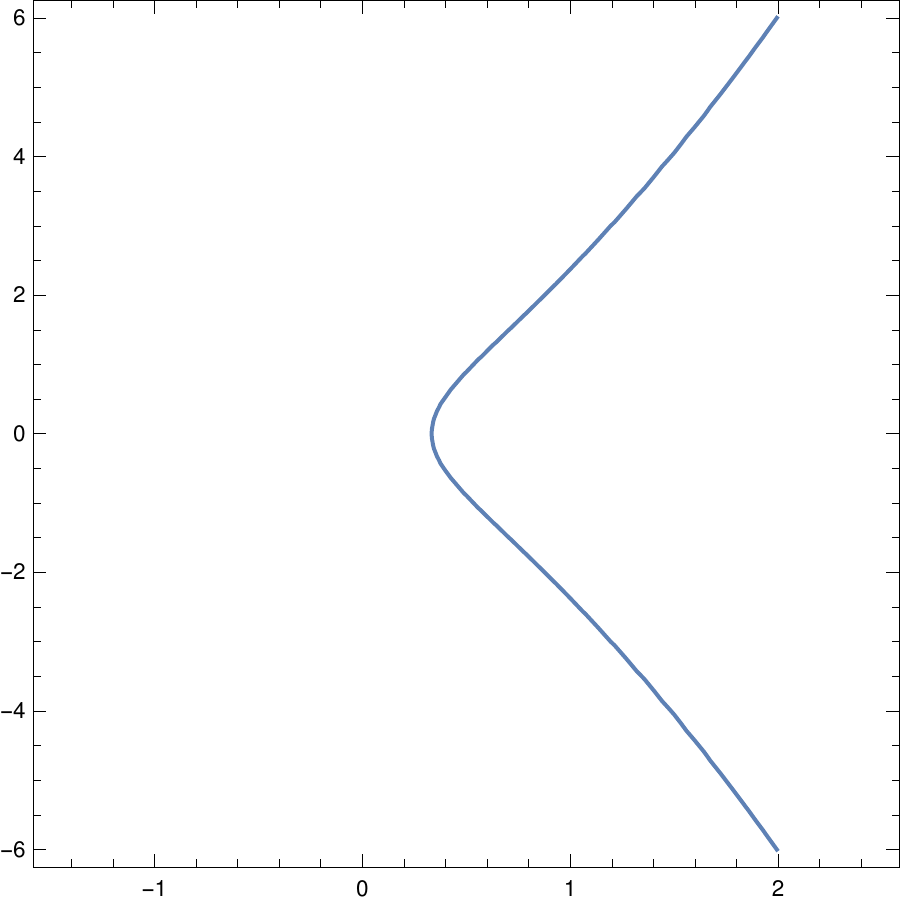}}
\scalebox{0.5}{\includegraphics{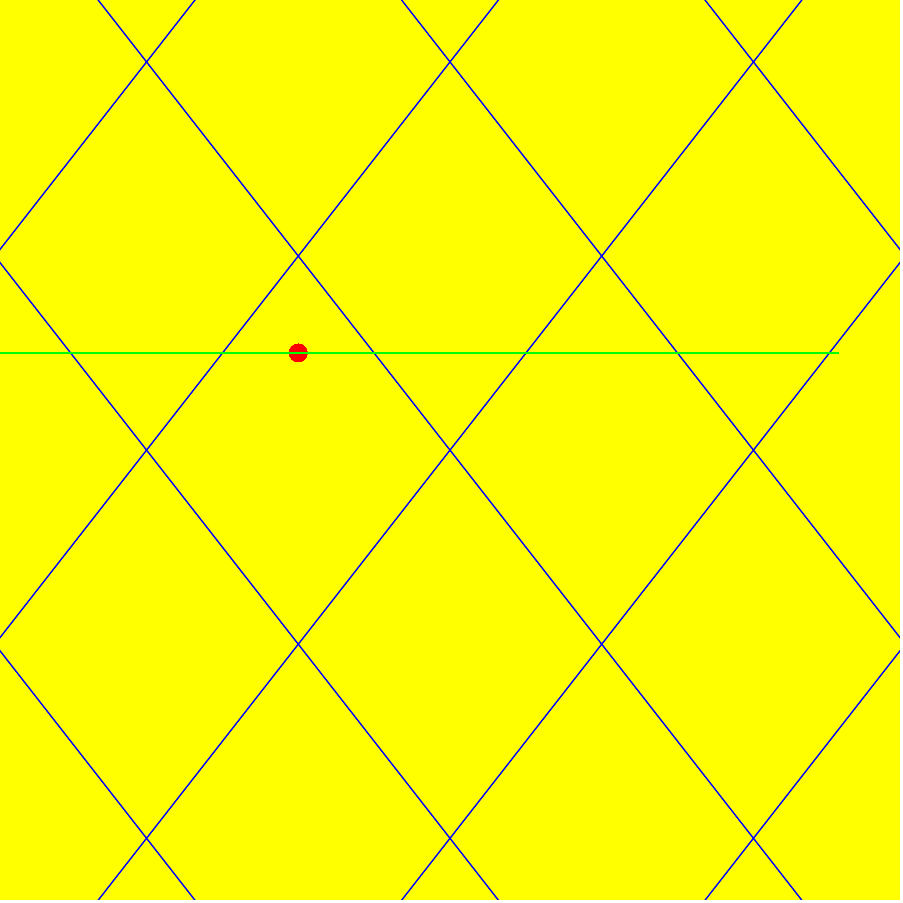}}
\caption{
For $c=-1$ and initial condition $\theta_0=0.$, and $\theta'_0=2.$, we
get the elliptic curve $y^2=4x^3-g_2 x-g_3$ with $g_2= -2.66667$, and
$g_3=1.03704$.The lattice is given by $\omega_1= 1.68575\, +2.15652 i$,
and  $\omega_2=-1.68575+2.15652 i$. The initial point on the curve is
$g_1= -1.68575+1.07826 i$. The discriminant is $\Delta= -48.$.
}
\end{figure}

\begin{figure}[!htpb]
\scalebox{0.5}{\includegraphics{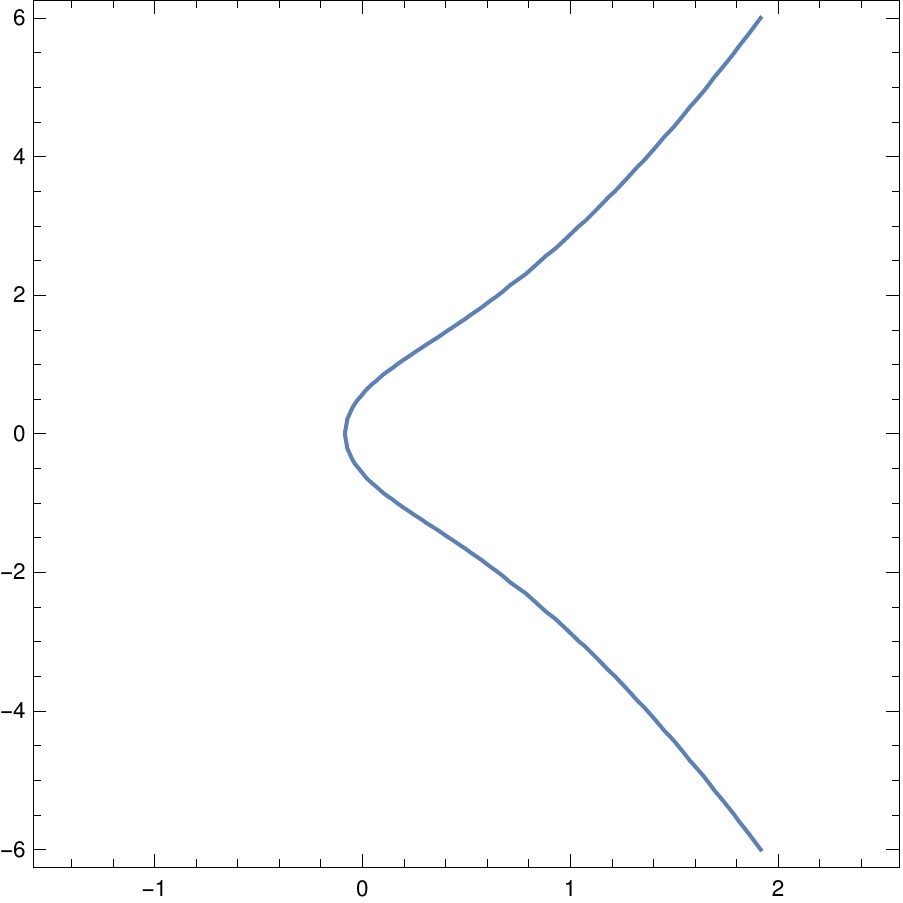}}
\scalebox{0.5}{\includegraphics{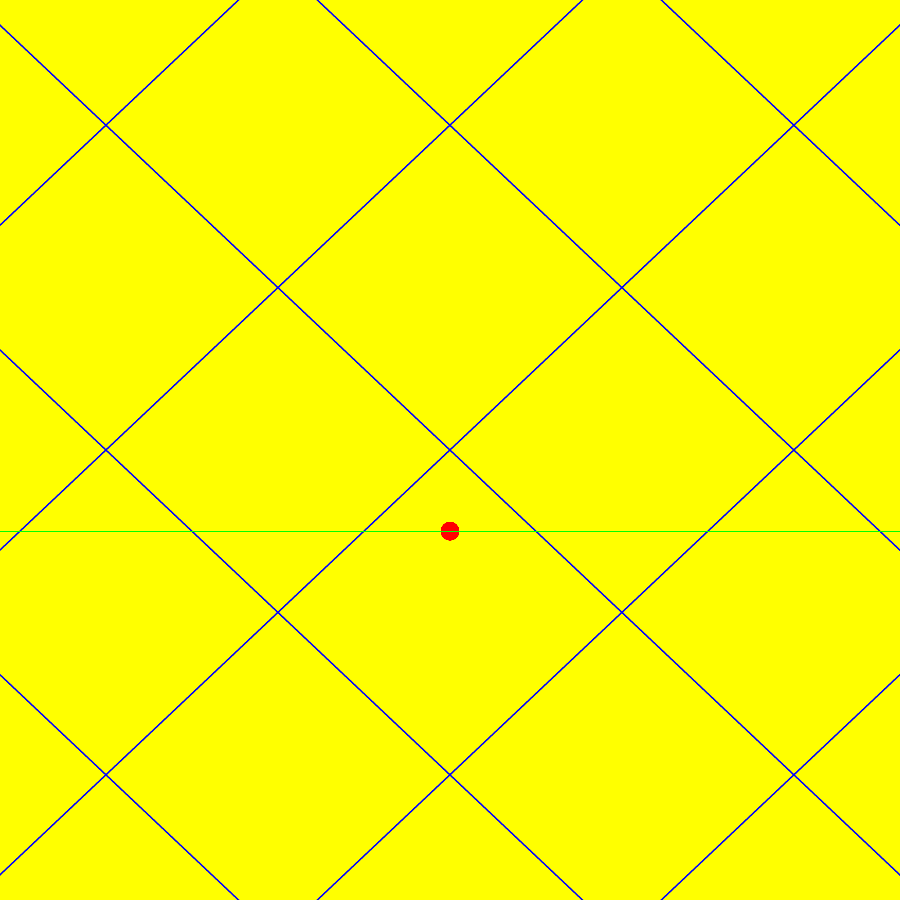}}
\caption{
For $c=-1$ and initial condition $\theta_0=0.$, and $\theta'_0=3.$, we
get the elliptic curve $y^2=4x^3-g_2 x-g_3$ with $g_2= -3.91667$, and
$g_3=-0.328704$.The lattice is given by $\omega_1= 1.91099\, -1.80446 i$,
and  $\omega_2=1.91099\, +1.80446 i$. The initial point on the curve is
$g_1= 0.\, -0.902231 i$. The discriminant is $\Delta= -63.$.
}
\end{figure}

\begin{figure}[!htpb]
\scalebox{0.5}{\includegraphics{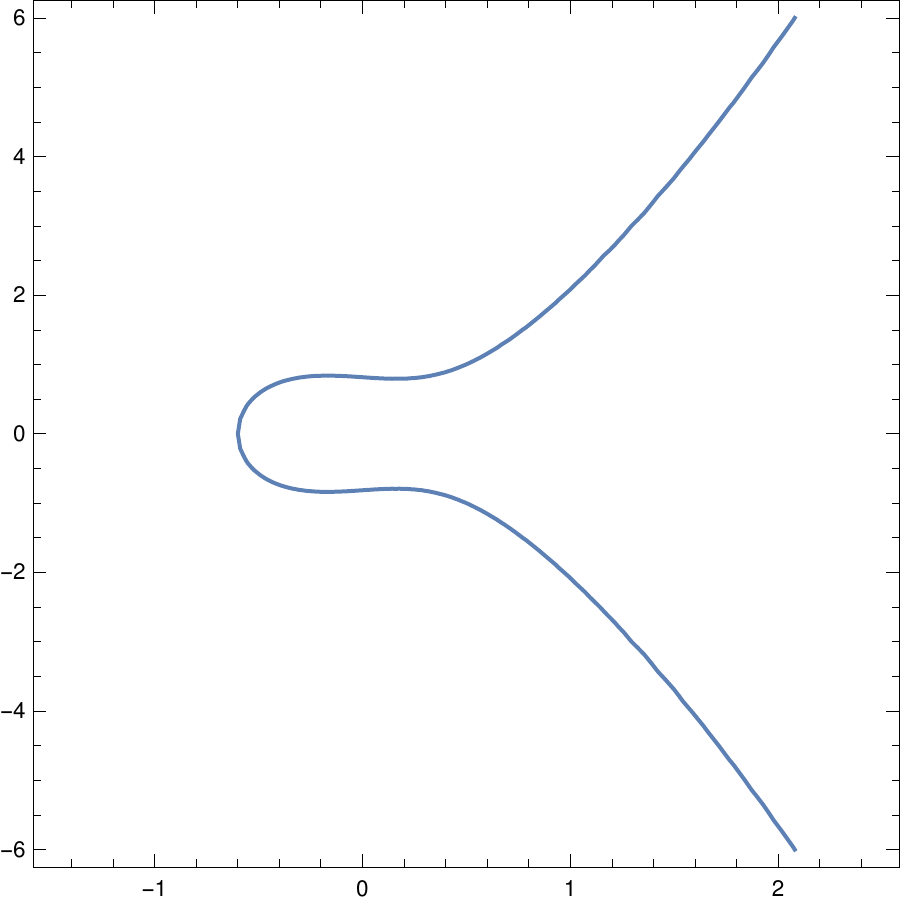}}
\scalebox{0.5}{\includegraphics{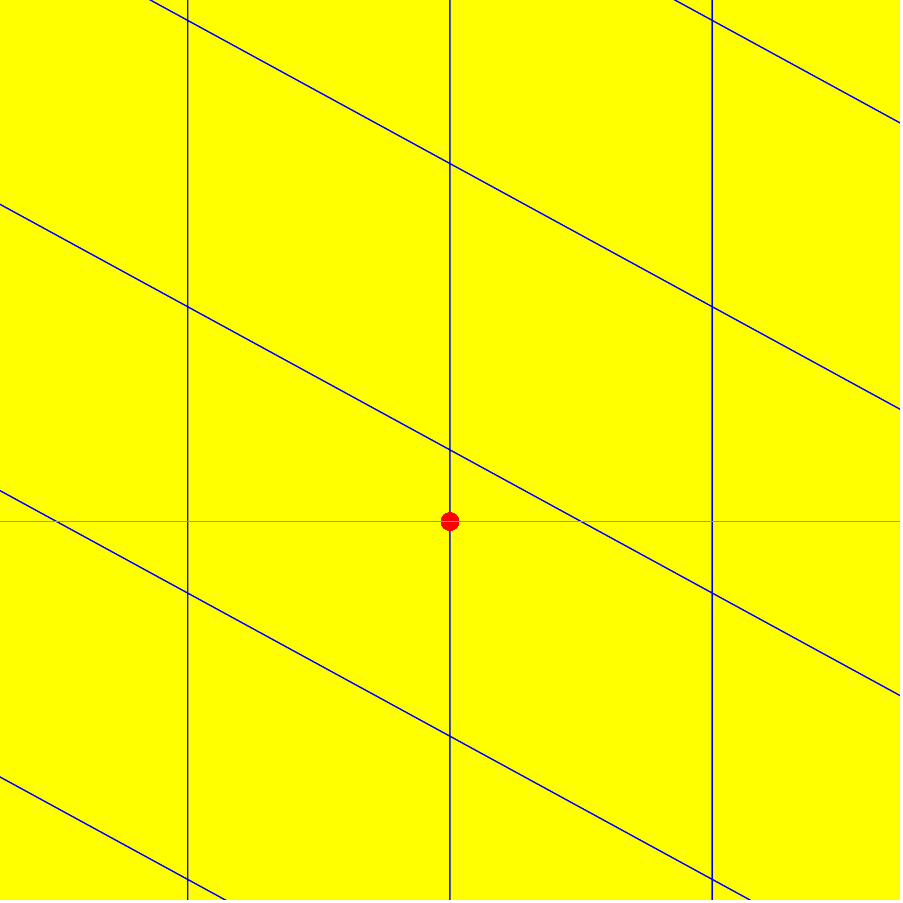}}
\caption{
For $c=-1$ and initial condition $\theta_0=0.$, and $\theta'_0=3.9$,
we get the elliptic curve $y^2=4x^3-g_2 x-g_3$ with $g_2= 0.332008$, and
$g_3=-0.668123$.The lattice is given by $\omega_1= 0.\, -3.18149 i$, and
$\omega_2=2.9144\, -1.59074 i$. The initial point on the curve is $g_1=
0.\, -0.795372 i$. The discriminant is $\Delta= -12.0159$.
}
\end{figure}

\begin{figure}[!htpb]
\scalebox{0.5}{\includegraphics{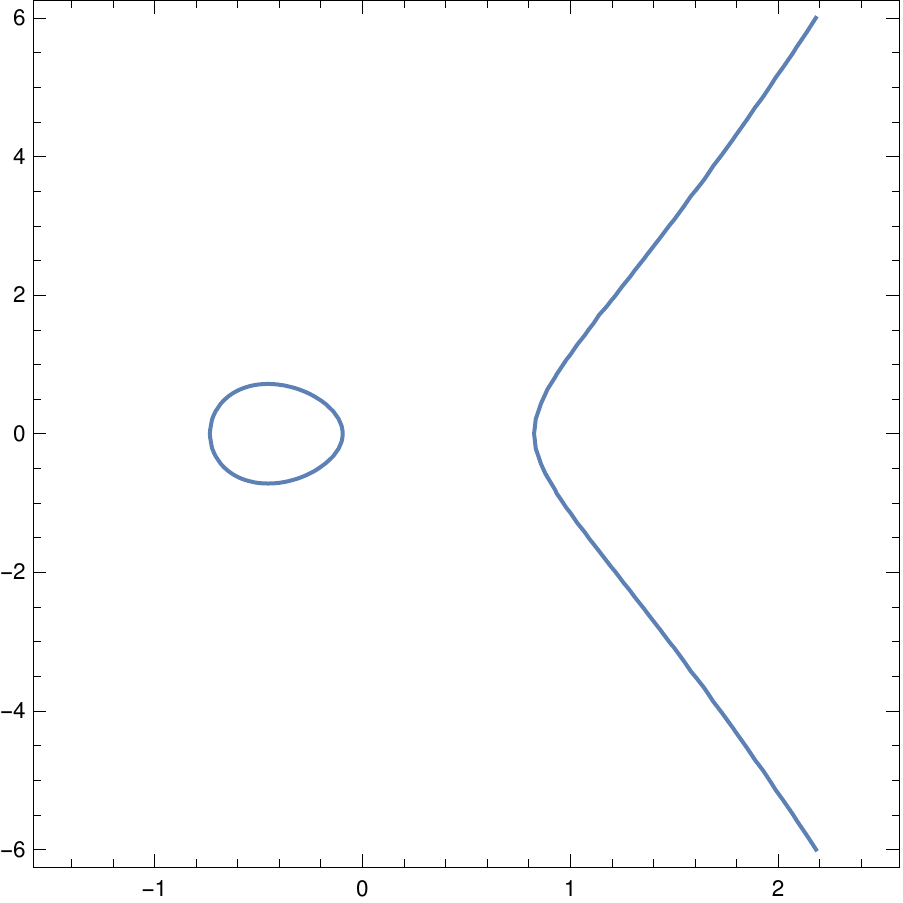}}
\scalebox{0.5}{\includegraphics{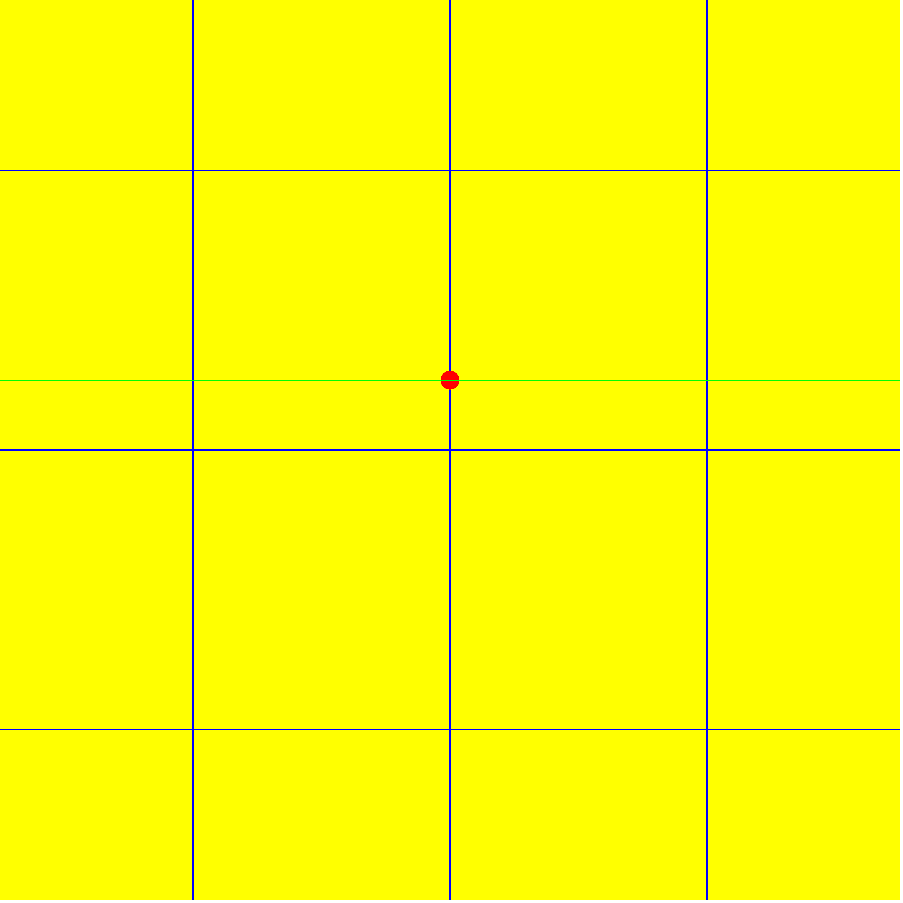}}
\caption{
For $c=-1$ and initial condition $\theta_0=0.$, and $\theta'_0=4.1$,
we get the elliptic curve $y^2=4x^3-g_2 x-g_3$ with $g_2= 2.46801$,
and  $g_3=0.229064$.The lattice is given by $\omega_1= 2.85478\, +0. i$,
and  $\omega_2=0.\, +3.10293 i$. The initial point on the curve is $g_1=
0.\, +0.775731 i$. The discriminant is $\Delta= 13.6161$.
}
\end{figure}

\begin{figure}[!htpb]
\scalebox{0.5}{\includegraphics{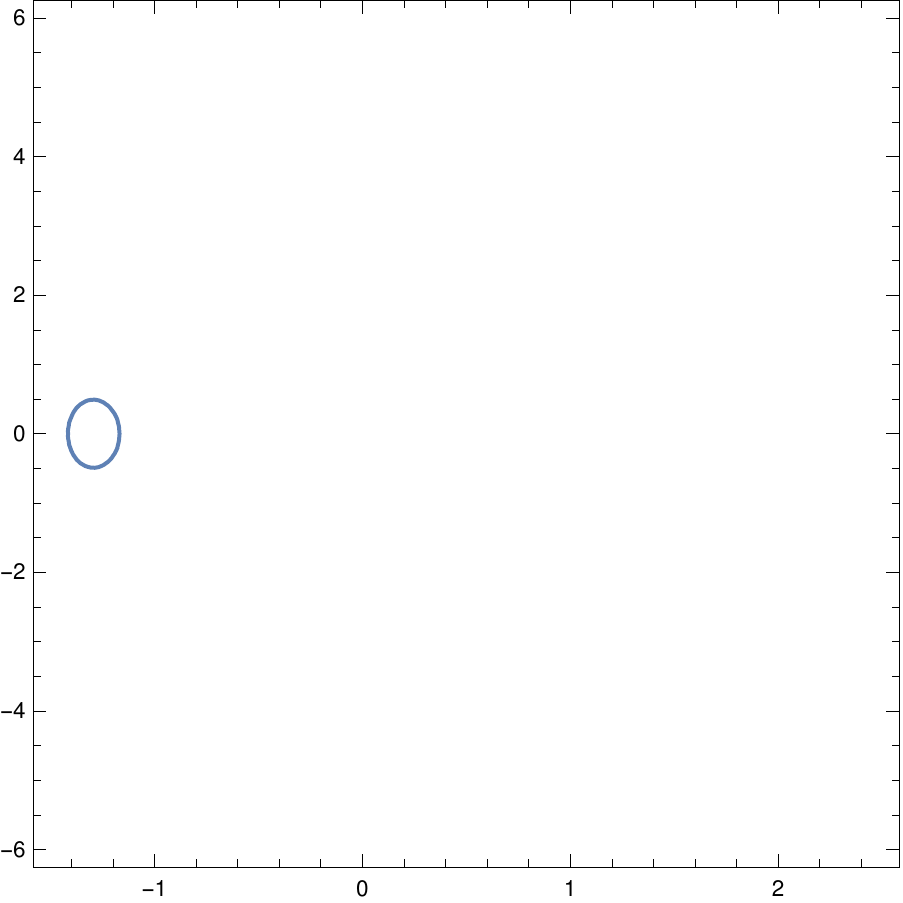}}
\scalebox{0.5}{\includegraphics{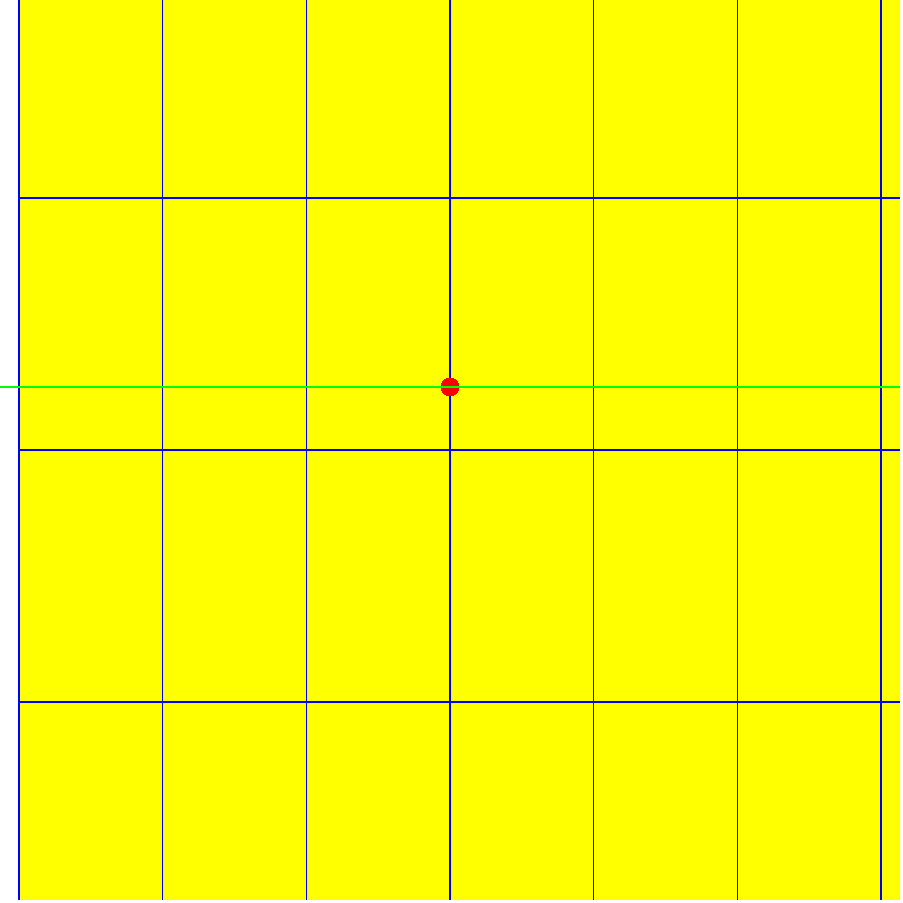}}
\caption{
For $c=-1$ and initial condition $\theta_0=0.$, and $\theta'_0=5.$,
we get the elliptic curve $y^2=4x^3-g_2 x-g_3$ with $g_2= 20.0833$,
and  $g_3=17.0787$.The lattice is given by $\omega_1= 1.59624\, +0. i$,
and  $\omega_2=0.\, +2.80121 i$. The initial point on the curve is $g_1=
0.\, +0.700302 i$. The discriminant is $\Delta= 225.$.
}
\end{figure}

\section*{Appendix: Mathematica Code}

\paragraph{}
Here is an implementation of the solution giving the Weierstrass elliptic
function in Mathematica:

\begin{tiny}
\lstset{language=Mathematica} \lstset{frameround=fttt}
\begin{lstlisting}[frame=single]
theta0=0.0; thetap0=1; c=-1; e=thetap0^2/2+4*c*Cos[theta0]; b=e/(6c);
a=2c-e^2/(6 c); w0=Exp[I*theta0]; wp0=I*w0*thetap0; w0=w0-b;
g2=-2a;    g3=-Chop[wp0^2 + 4*w0^3 + g2*w0]; P=WeierstrassP;
g1=Quiet[(z /. First[Solve[w0==c*P[c*z,{c*g2,g3}],z]]) /. C[1]->0 /. C[2]->0];
z[t_]:=b+c*P[c(t+g1),{c*g2,g3}]; theta[t_]:=Arg[z[t]]; 
Plot[theta[t], {t, 0, 10}, PlotRange->{-Pi,Pi}]
\end{lstlisting}
\end{tiny}

\paragraph{}
The solution with the Jacobi amplitude function does only plot the trajectory
for a half period: 

\begin{tiny}
\lstset{language=Mathematica} \lstset{frameround=fttt}
\begin{lstlisting}[frame=single]
c=-1; thetap0=1.0; 
s=DSolve[{x''[t]==4c*Sin[x[t]],x[0]==0,x'[0]==thetap0},x[t],t];
theta1[T_]:=First[x[t] /. s] /. t->T;
theta1[T_]:=2*JacobiAmplitude[thetap0*T/2,-16/(thetap0^2 c)]; 
Plot[theta1[t], {t, 0, 10}, PlotRange->{-Pi,Pi}]
\end{lstlisting}
\end{tiny}

\paragraph{}
A numerical solution given by the computer algebra system 
only gives good values for relatively 
short time intervals like $t=100$. For longer times, like 
$t=10^6$, the errors in the numerical methods have added up
significantly already and the solution has become unreliable.
The Weierstrass explicit solution is more convenient.

\begin{tiny}
\lstset{language=Mathematica} \lstset{frameround=fttt}
\begin{lstlisting}[frame=single]
c=-1; s=NDSolve[{x''[t]==4c*Sin[x[t]],x[0]==0,x'[0]==1},x[t],{t,0,10^6}];
theta2[T_] := First[x[t] /. s] /. t -> T;
theta[1.0*10^2] - theta2[1.0*10^2]
theta[1.0*10^6] - theta2[1.0*10^6]
\end{lstlisting}
\end{tiny}

\bibliographystyle{plain}

\end{document}